\documentclass[10pt]{amsart}

\usepackage{amscd,amsmath,amsfonts,amsthm,amssymb}
\usepackage{enumitem}
\usepackage{graphicx}
\usepackage{xcolor}
\usepackage[all]{xy}
\usepackage[top=25truemm,bottom=25truemm,left=15truemm,right=15truemm]{geometry}

\usepackage[colorlinks=true,citecolor=blue,linkcolor=magenta,urlcolor=green,
  bookmarksnumbered=true,pdfborder={1 0 0},bookmarkstype=toc]{hyperref}

\usepackage[nameinlink,noabbrev]{cleveref}

\theoremstyle{definition}
\newtheorem{thm}{Theorem}[section]
\newtheorem{dfn}[thm]{Definition}
\newtheorem{lem}[thm]{Lemma}
\newtheorem{pro}[thm]{Proposition}

\newtheorem{exa}[thm]{Example}
\newtheorem{rmk}[thm]{Remark}
\newtheorem{nota}[thm]{Notation}

\crefname{thm}{Theorem}{Theorems}
\crefname{lem}{Lemma}{Lemmas}
\crefname{pro}{Proposition}{Propositions}
\crefname{cor}{Corollary}{Corollaries}
\crefname{dfn}{Definition}{Definitions}
\crefname{exa}{Example}{Examples}
\crefname{rmk}{Remark}{Remarks}
\crefname{nota}{Notation}{Notations}

\newcommand{\exend}{\unskip\nobreak\hfill$\blacklozenge$}

\newcommand{\D}{\mathcal{D}}
\newcommand{\F}{\mathbb{F}}
\newcommand{\M}{\mathcal{M}}
\newcommand{\Q}{\mathbb{Q}}
\newcommand{\R}{\mathbb{R}}
\newcommand{\Z}{\mathbb{Z}}

\renewcommand{\phi}{\varphi}
\renewcommand{\epsilon}{\varepsilon}

\newcommand{\Hom}{\mathop{\mathrm{Hom}}\nolimits}

\newcommand{\End}{\mathop{\mathrm{End}}\nolimits}

\renewcommand{\ker}{\mathrm{ker}}

\newcommand{\tr}{\mathrm{tr}}
\newcommand{\Tr}{\mathrm{Tr}}

\renewcommand{\lim}[1][]{\mathop{\mathrm{lim}}\limits_{#1}}
\newcommand{\colim}[1][]{\mathop{\mathrm{colim}}\limits_{#1}}

\newcommand{\op}{\text{op}}
\newcommand{\st}{\text{st}}
\newcommand{\arc}{\text{arc}}
\newcommand{\mot}{\text{mot}}

\newcommand{\Mod}{\mathrm{Mod}}
\newcommand{\Ban}{\mathrm{Ban}}
\newcommand{\Ani}{\mathrm{Ani}}

\newcommand\bcdot{\ensuremath{%
  \mathchoice%
   {\mskip\thinmuskip\lower0.2ex\hbox{\scalebox{1.5}{$\cdot$}}\mskip\thinmuskip}}%
   {\mskip\thinmuskip\lower0.2ex\hbox{\scalebox{1.5}{$\cdot$}}\mskip\thinmuskip}%
   {\lower0.3ex\hbox{\scalebox{1.2}{$\cdot$}}}%
   {\lower0.3ex\hbox{\scalebox{1.2}{$\cdot$}}}%
   }

\begin{document}

\title[Notes on congruence zeta functions via a Berkovich approach]{Notes on congruence zeta functions via a Berkovich approach}
\author[Y. Yamada]{Yuto Yamada}
\address{Department of Mathematics, Institute of Science Tokyo, 2-12-1 Ookayama, Meguro, Tokyo 152-8551}
\email{yamada.y.f243@m.isct.ac.jp}
\begin{abstract}
We revisit congruence zeta functions of smooth projective varieties over finite fields in the framework of Scholze’s Berkovich motives. To use it, we define the notion of motivic congruence zeta functions via categorical traces as an analogue of Kahn's approach. With this motivic setting, we prove that our new zeta functions agree with the classical one.
\end{abstract}

\keywords{Banach rings, motives, congruence zeta functions}

\subjclass[2000]{Primary 14G05; Secondary 11G25, 14F42}
\date{\today}

\maketitle

\tableofcontents

\section{Introduction}

\subsection*{A Brief History of Congruence Zeta Functions}

The congruence zeta function of a variety over a finite field is one of the most classical and important objects in arithmetic geometry. Let $X$ be a variety over a finite field $\F_q$ (i.e. separated $\F_q$-scheme of finite type). Its congruence zeta function is defined by
\[
Z(X,t)=\exp\left(\sum_{n=1}^\infty\dfrac{\#X(\mathbb{F}_{q^n})}{n}t^n\right)\in 1 + t\Q[[t]].
\]
For smooth projective curves (i.e. algebraic curves, in particular, elliptic curves), Hasse and Weil showed that $Z(X,t)$ is a rational function and identified its numerator as the characteristic polynomial of Frobenius acting on the Jacobian. Grothendieck et al. have developed the theory of $\ell$-adic \'{e}tale cohomology, and proved the Grothendieck-Lefschetz trace formula (\cite{SGA}) for higher dimensional smooth projective varieties, which have contributed to the progression of Weil's conjecture (\cite{W}) and algebraic/arithmetic geometry.

\subsection*{A Motivic Approach}

Kahn has defined categorical zeta functions via a “motivic" interpretation of an ($\ell$-adic) \'{e}tale cohomology theory, and compared with the classical (congruence) zeta function in \cite{Ka}. His strategy is to use general (categorical) traces of triangulated categories and (classical)  six-functor formalism. In fact, he has challenged not only congruence zeta functions over a finite field, but also $L$-functions over a global field by the machinery of traces and six-functor formalism above. Similarly, our strategy can be also expected to extend to the cases of global fields.

\subsection*{Berkovich Motives}

On the other hand, Scholze has recently constructed a motivic six-functor formalism on small arc-stacks associated with Banach rings / Berkovich spaces in \cite{S}. This theory gives a unified construction of “motives" of geometric points, in particular, it recovers Voevodsky's \'{e}tale motives over algebraically closed fields (see also \cite[Proposition 1.14]{S}). Note that, over a discrete ground field, we can consider varieties as small arc-stacks via trivial norms.

\subsection*{Main Theorem}

The aim of this paper is to give a new description of congruence zeta functions of smooth projective varieties over finite fields. We can view the $\infty$-category $\widetilde{\D_\mot}(X)$ of “modified" motivic sheaves as a dualizable object in the $(\infty,2)$-category $\Mod_{\D_\mot(k)}(\mathrm{Pr}^L)$ where $X$ is a smooth projective variety over $k$. Via categorical trace formalisms, we define a \emph{motivic congruence zeta function} of $X$ in the spirit of Kahn's construction (,however, without explicitly using Chow motives). Our main result identifies this new motivic congruence zeta function with the classical one defined above:

\begin{thm}[\cref{thm:classical}]
For smooth projective variety $X$ over a finite field, we obtain $Z_\mot(X,t)=Z(X,t)$.
\end{thm}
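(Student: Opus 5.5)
The plan is to reduce the motivic zeta function to a generating series of categorical traces of powers of Frobenius. Each trace is then computed in two independent ways: once geometrically, as a count of fixed points, and once by comparison with $\ell$-adic realization.

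\textbf{Step 1: unwind the definition.} By construction, $Z_\mot(X,t)$ is $\exp\bigl(\sum_{n\ge1}\frac{a_n}{n}t^n\bigr)$, where $a_n=\Tr(F^n\mid \widetilde{\D_\mot}(X))$. Here $F$ is the endomorphism induced by the $q$-Frobenius of $X$, and the trace is taken in $\Mod_{\D_\mot(k)}(\mathrm{Pr}^L)$, so $a_n$ is an element of $\End(1_{\D_\mot(k)})$ or of its $\pi_0$. Comparing coefficients of $\log$, it therefore suffices to prove $a_n=\#X(\F_{q^n})$ for every $n\ge1$, with both sides viewed in $\Q$.

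\textbf{Step 2: reduce to a trace of a correspondence.} For $f\colon X\to\operatorname{Spec}k$ smooth and proper, the six-functor formalism on arc-stacks makes $\widetilde{\D_\mot}(X)$ dualizable, and I expect it to be self-dual with evaluation and coevaluation given by $f_!\Delta^*$ and $\Delta_!f^*$, suitably twisted. The categorical trace of an endofunctor induced by a correspondence $X\xleftarrow{a}C\xrightarrow{b}X$ is then the class of $(f\circ a)_!$ applied to the restriction along $C\times_{X\times X}\Delta$. For $F^n$ the correspondence is the graph $\Gamma_{F^n}$, so $a_n$ equals the class of the motive of the fixed-point locus $\Gamma_{F^n}\cap\Delta=X^{F^n}$. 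Since $d(F^n)=0$, this intersection is transversal, and in fact étale: it is the finite reduced scheme $X(\F_{q^n})$ over $k$.

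\textbf{Step 3: compute the trace of a finite étale scheme.} For $\pi\colon Z\to\operatorname{Spec}k$ finite étale, the trace of $\pi_!\pi^*1$ in $\End(1)$ is $\deg\pi$. This holds after base change to $\bar k$, where $Z$ splits as copies of the point; because $\End(1)$ in $\D_\mot(k)$ has $\pi_0$ contained in $\Q$, the check can be done after passing to a geometric point. This gives $a_n=\#X(\F_{q^n})$.

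\textbf{Alternative route.} If the direct geometric computation is awkward, I would instead apply the $\ell$-adic realization. It is a symmetric monoidal colimit-preserving functor, and such functors preserve categorical traces. Under it, $a_n$ maps to $\sum_i(-1)^i\tr(F^n\mid H^i(X_{\bar k},\Q_\ell))$, and the Grothendieck--Lefschetz trace formula \cite{SGA} identifies this sum with $\#X(\F_{q^n})$.

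\textbf{Main obstacle.} The hardest part will be Step 2. One has to show that the trace in $\Mod_{\D_\mot(k)}(\mathrm{Pr}^L)$ of the functor induced by $F^n$ on the \emph{modified} category agrees with the trace of its action on the motive $f_!1_X$. This amounts to a Hochschild-type identification $\mathrm{HH}(\widetilde{\D_\mot}(X))\simeq f_*\Delta^*\Delta_*1$. I would first try to establish it from the general fact that for a smooth proper $f$ the category is generated by the dualizable object $f_!1$ over $\D_\mot(k)$, and then reduce to a trace of an object via the standard formula $\Tr(\phi)=\operatorname{tr}(\phi_{*}\mid \End)$. A second point to check in this step is that the Frobenius twist is handled correctly: $F$ is $k$-linear only relative to $X$, not on the base, so the relative $q$-Frobenius must be used throughout.
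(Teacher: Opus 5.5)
\textbf{There is a genuine gap in your main route (Steps 1--3), starting in Step 1.} Your ``alternative route'' is the paper's proof and does go through.

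Step 1 misidentifies the coefficients of $Z_\mot(X,t)$. They are not the categorical trace of $F^n$ on the category $\widetilde{\D_\mot}(X_{k,\arc})$. In $M=\Mod_{\D_\mot(k)}(\mathrm{Pr}^L)$ such a trace is an \emph{object} of $\End_M(1)\simeq\D_\mot(k)$ (a Hochschild-type invariant), not an element of $\pi_0\End(1)$. The paper's coefficient is $\#_{F,n}(1_{X_{k,\arc}})$. By definition this equals $\#_{(F^\ast)^n,k_\arc}$ applied to the single dualizable object $f_\ast(1)^\lor\widetilde{\otimes}\Q_\ell$ of the base, i.e.\ a trace of Frobenius on an object rather than on a category.

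Step 2 needs $\widetilde{\D_\mot}(X_k)$ to be self-dual over $\D_\mot(k)$ with coevaluation $\Delta_!f^\ast$ and evaluation $f_!\Delta^\ast$. That amounts to a K\"unneth equivalence $\widetilde{\D_\mot}(X_k)\otimes_{\D_\mot(k)}\widetilde{\D_\mot}(X_k)\simeq\widetilde{\D_\mot}(X_k\times_kX_k)$. This is not a formal consequence of dualizability, and \cref{lem:dualizability} gives no description of the dual. In the analogous \'{e}tale setting in characteristic $p$ such a K\"unneth statement fails. Without it, the correspondence formula and the reduction to the fixed-point scheme $X^{F^n}$ are unsupported.

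Even granting them, you would only obtain the object $\pi_!\pi^\ast1$. To compare its dimension with the actual coefficient you would still need a symmetry of secondary traces together with $\mathrm{HH}\simeq f_!1$. That is exactly the ``main obstacle'' you flag, and it remains open. The transversality of $\Gamma_{F^n}$ and $\Delta$ and the degree count in Step 3 are fine; the base change to $\bar k$ is vacuous since $k=\overline{\F}_q$.

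With the actual definition, your alternative route is almost immediate. By \cref{rmk:realization} (Scholze's Theorem 11.1), $R\Hom_{\D_\mot(k)}(\Z_\mot[X_k],\Z)$ becomes $R\Gamma(X_{k,\text{\'{e}t}},\Q_\ell)$ after prime-to-$p$ completion and $\widetilde{\otimes}\Q_\ell$, functorially in $X_k$. Hence \cref{rmk:computation of local term} gives $\#_{F,n}(1_{X_{k,\arc}})=\sum_i(-1)^i\Tr((F_{X_k}^\ast)^n\mid H^i(X_{k,\text{\'{e}t}},\Q_\ell))$. The Grothendieck--Lefschetz trace formula turns this into $\#X(\F_{q^n})$. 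No category-level trace has to be realized, and no K\"unneth formula is needed.

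The Frobenius twist you worry about is absorbed by the paper's choice of $F_{X_k}$ as the base change of $F$, the geometric Frobenius of $\mathrm{Spec}(k)$. This endofunctor is semilinear over $F^\ast$ on $\D_\mot(k)$, which is why the traces land in $\pi_0\tr((F^\ast)^n|k_\arc)$. Its action on \'{e}tale cohomology is the one appearing in the Grothendieck--Lefschetz formula.
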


\subsection*{Structure of this paper}

In \cref{sec:Recollection of Berkovich motives}, we recall the basic notions of Berkovich spaces and motivic sheaves according to \cite{S}. Since Berkovich motives include still quite new notions, we allow ourselves a slightly redundant recollection (e.g. Banach $\mathbb{C}$-algebras via Gelfand duality).

In \cref{sec:Quick Review of Categorical Trace}, we review the formalism of categorical traces in a presentably symmetric monoidal stable $\infty$-category and in its $(\infty,2)$-category of module categories. Our strategy is inspired by Clausen-Scholze's lecture notes (\cite{CC}) (and more general discussion by \cite{HSS}). 

Finally, in \cref{sec:Some Properties of Categorical Zeta Functions}, we define a “Berkovich motivic" version of congruence zeta functions for smooth projective varieties over a finite field, and we analyze our new motivic congruence zeta function via the categorical trace method and the Grothendieck--Lefschetz trace formula. Moreover, we want to emphasize that the same strategy can be applied, with little change, to suitable “varieties" over algebraically closed Banach fields.

\subsection*{Notation}

In this paper, we fix a prime number $p>0$. We assume that rings are commutative rings with unit. Basically, we will work in (stable) $\infty$-categories. 

\subsection*{Acknowledgement}

This work was supported by JST SPRING, Japan Grant Number JPMJSP2180.

\section{Recollection of Berkovich motives}\label{sec:Recollection of Berkovich motives}

In this section, we will review the notions of Berkovich spaces and motivic sheaves.

\begin{dfn}[{\cite[Definition 2.1,2.2,2.10]{S}}, Banach rings/fields]
A ring $R$ is \emph{Banach} if $R$ is equipped with a map $|-|_R:R\to\R_{\geq0}$ satisfying the following conditions:
\begin{enumerate}
    \item We have $|0|_R=0$ and $|-1|_R\leq1$.
    \item For any elements $a,b\in R$, we have $|ab|_R\leq|a|_R|b|_R$.
    \item For any elements $a,b\in R$, we have $|a+b|_R\leq|a|_R+|b|_R$.
    \item The ring $R$ is complete with respect to the induced topology by $|-|_R$.
\end{enumerate}
Moreover, a Banach ring $(K,|-|_K)$ is a \emph{Banach field} if $K$ is a field, and for any elements $a,b\in K$, we have $|ab|_K=|a|_K|b|_K$.

Furthermore, we let $\Ban$ denote the category of Banach rings with non-expanding maps (i.e. maps $f:R\to S$ of rings satisfying $|f(a)|_S\leq|a|_R$ for any element $a\in R$). Note that $\Ban$ has all colimits (\cite[Proposition 2.4]{S}).
\exend
\end{dfn}

\begin{dfn}[{\cite[Definition 2.13]{S}}, Berkovich spectra]
The \emph{Berkovich spectrum} of a Banach ring $R$ is the closed subspace $\M(R)\subset\prod_{a\in R}[0,|a|_R]$ of maps $\|-\|:R\to\R_{\geq0}$ satisfying the following conditions:
\begin{enumerate}
    \item We have $\|0\|=0$ and $\|1\|=1$.
    \item For any element $a\in R$, we have $\|a\|\leq|a|_R$. 
    \item For any elements $a,b\in R$, we have $\|ab\|=\|a\|\|b\|$.
    \item For any elements $a,b\in R$, we have $\|a+b\|\leq\|a\|+\|b\|$.
\end{enumerate}
For any point $x\in\M(R)$ (which corresponds to the map $\|-\|_x$), we let $K(x)$ denote the completion of the fraction field of the Banach ring $(R/\ker(\|-\|_x),\|-\|_x)$, which is a Banach field.
\exend
\end{dfn}

\begin{dfn}[(strictly) totally disconnectedness]
Let $(R,|-|_R)$ be a Banach ring. We call $R$:
\begin{enumerate}
    \item (\cite[Definition 2.17]{S}) \emph{analytic} if for any point $x\in\M(R)$, the corresponding Banach field $K(x)$ is non-discrete.
    \item (\cite[Definition 2.20 and Proposition 2.21]{S}) \emph{uniform} if for any element $a\in R$, we have $|a|_R=\sup\{\|a\|_x\mid x\in\M(R)\}=\lim[n\to\infty]|a^n|_R^{1/n}$; equivalently, $|-|_R$ is power-multiplicative.
    \item (\cite[Definition 3.10]{S}) \emph{totally disconnected} if $R$ is analytic and uniform, and $\M(R)$ is profinite (i.e. totally disconnected and compact Hausdorff).
    \item (\cite[Definition 3.10]{S}) \emph{strictly totally disconnected} if $R$ is totally disconnected, and for any point $x\in\M(R)$, the corresponding Banach field $K(x)$ is algebraically closed.
\end{enumerate}
Moreover, we let $\mathrm{TD}^\st$ denote the full subcategory of $\Ban$ spanned by strictly totally disconnected Banach rings.
\exend
\end{dfn}

We define the “arc-topology", the useful topology to analyze the “motives".

\begin{dfn}[arc-site]
We define the following notions:
\begin{enumerate}
    \item (\cite[Definition 3.1]{S}) A family $\{R\to S_i\}_{i\in I}$ of maps in $\Ban$ is an \emph{arc-cover} if there is a finite subset $J\subset I$ such that the induced map $\bigsqcup_{j\in J}\M(S_j)\to\M(R)$ is surjective.
    \item (\cite[Proposition 3.3]{S}) the site $(\Ban^\op,\arc)$ with the (finitary) Grothendieck topology given by arc-covers is called an \emph{arc-site}. Note that $(\Ban^\op,\arc)$ has a basis $\mathrm{TD}^\st$ (\cite[Corollary 3.12]{S}), and $(\mathrm{TD}^\st)^\op$ is subcanonical (\cite[Theorem 3.13]{S}).
    \item Let $R$ be a Banach ring. The arc-sheaf $\M_\arc(R)$ is the arc-sheafification of $S\longmapsto\Hom_{\Ban}(R,S)$.
    \item A sheaf of $(\Ban^\op,\arc)$ with values in $\Ani$ (i.e. an object in $\mathrm{Shv}_\arc(\Ban^\op,\Ani)$) is called an \emph{arc-stack}. Moreover, an arc-stack is \emph{small} if it can be represented by a \emph{small} colimit $\colim(\M_\arc(R))$.
\end{enumerate}
\exend
\end{dfn}

Firstly, we define some notions of arc-sheaves.

\begin{dfn}[finitary/ball-invariant/(effective) motivic sheaves]
Let $X$ be a small arc-stack. We define the following notions and $\infty$-categories:
\begin{enumerate}
    \item (\cite[Definition 4.10 and Proposition 4.12]{S}) The $\infty$-category $\D_\text{fin}(X)$ of \emph{finitary sheaves} on $X$ is defined as the full $\infty$-subcategory of $\mathrm{Fun}(\mathrm{TD}_{/X}^\st,\D(\Z))$ spanned by the functors which commute with finite products and filtered colimits; equivalently, the full $\infty$-subcategory of $\D(X_\arc):=\mathrm{Shv}_\arc(\Ban_{/X}^\op,\D(\Z))$ spanned by the arc-sheaves whose restriction to strictly totally disconnected Banach rings over $X$ commute with filtered colimits (see also the first assertion of \cite[Theorem 4.1]{S}).
    \item (\cite[Definition 5.1]{S}) An object $F\in\D(X_\arc)$ is \emph{ball-invariant} if for any Banach ring $R$ over $X$, the value on $R$ and $R\langle T\rangle_1$ agree (where $R\langle T\rangle_1$ denotes the free uniform Banach $R$-algebra on a variable $T$ with $|T|\leq1$).
    \item (\cite[Definition 5.2 and Proposition 5.8]{S}) The $\infty$-category $\D_\mot^\text{eff}(X)$ of \emph{effective motivic sheaves} on $X$ is defined as the full $\infty$-subcategory of $\D_\text{fin}(X)$ spanned by ball-invariant sheaves; equivalently, the essential image of $L:\D_\text{fin}(X)\to\D_\text{fin}(X)$ defined by
    \[
    K\longmapsto(R\longmapsto\colim[n\in\Delta^\op]K(R\langle \Delta^n\rangle)),
    \]
    where $R\langle \Delta^n\rangle$ denotes the Banach $R$-algebra $R\langle T_1,\ldots,T_n\rangle_{1,\ldots,1}$ adjoined $n$-variables freely.
    \item (\cite[Definition 5.18]{S}) The \emph{Tate twist} $\Z(1)$ is defined by the arc-sheaf $(\Z_\mot[\mathbb{G}_m]/\Z_\mot[\ast])[-1]$ where $\Z_\mot[-]$ denotes the free motivic sheaf discussed in \cite[Proposition 5.12 and Theorem 6.1]{S}. Note that if we work with nonarchimedean world, then it is equivalent to the arc-sheaf $(\mathbb{G}_m/(1+\mathcal{O}_{<1}))[-1]$ where $\mathcal{O}_{<1}$ denotes the sheaf $R\longmapsto R_{<1}$ (\cite[Definition 5.15 and Proposition 5.17]{S}); or if we work over $\mathbb{C}$, then it is equal to $\Z$.
    \item (\cite[Definition 9.1]{S}) The $\infty$-category $\D_\mot(X)$ of \emph{motivic sheaves} is defined by $\D_\mot^\text{eff}(X)[\Z(1)^{\otimes -1}]$. Note that for any negative integer $n\in\Z_{<0}$, $\Z(-n)$ can be well-defined as $(\Z(1)^{\otimes -1})^{\otimes(-n)}$ in the motivic sheaves.
\end{enumerate}
\exend
\end{dfn}

\begin{exa}[Banach $\mathbb{C}$-algebras]\label{exa:Betti}
By Gelfand duality (\cite{G}), the arc-site on Banach $\mathbb{C}$-algebras is Morita-equivalent to the site of compact Hausdorff spaces equipped with finitary jointly surjections (see also \cite[Proposition 3.9]{S}). Moreover, by \cite[Theorem 4.14]{S}, for any Banach $\mathbb{C}$-algebra $R$, there is an equivalence $\widehat{\D}(\M(R))\simeq\D_\text{fin}(\M_\arc(R))$ of $\infty$-categories (where $\widehat{\D}(\M(R))$ denotes the left-completion of $\D(\M(R))$) (see also \cite[Section 4]{C}, in particular, \emph{the topological case}).
\exend
\end{exa}

\begin{rmk}[six-functor formalism]
The following statement makes the construction of motivic sheaves reasonable: We can construct six-functor formalism $X\longmapsto\D_\mot(X)$ from the class of finite cohomological dimension (See also \cite[Theorem 3.4.11]{HM}). In this formalism, we can obtain the desired push-forward by \cite[Theorem 9.2, Corollary 10.2]{S}, any \'{e}tale map is cohomologically \'{e}tale by {\cite[Proposition 9.5]{S}}, and any “smooth" map is cohomologically smooth by {\cite[Corollary 9.6]{S}}.
\exend
\end{rmk}

\begin{exa}[examples for geometric points]\label{exa:geometric points}
If $K$ is an algebraically closed Banach field, there are the following explicit description of $\D_\mot(K)$:
\begin{enumerate}
    \item Similarly as in \cref{exa:Betti}, if $K=\mathbb{C}$, then $\D_\mot(K)$ is equal to $\D(\Z)$.
    \item (\cite[Proposition 10.1]{S}) If $K$ is non-discrete and mixed characteristic, then $\D_\mot(K)$ is compactly generated, the unit is compact, and all compact objects are dualizable.
    \item (\cite[Proposition 10.1]{S}) If $K$ is non-discrete and equal characteristic, then $\D_\mot(K)$ is compactly generated, the unit is compact, and all compact objects are dualizable. Moreover, with a splitting $k\to K$ of its residue field $k$, we can take
    \[
    \{\Z_\mot[X_K](-j)\mid\text{$X$ is a smooth projective variety over $k$, and $j$ is a nonnegative integer.}\}
    \]
    as a generating family of compact objects.
   \item (\cite[Theorem 11.1]{S}) If $K$ is discrete, then $\D_\mot(K)$ is generated by compact dualizable objects $\Z_\mot[X](-j)$ where $X$ is a smooth projective variety over $k$, and $j$ is an integer.
\end{enumerate}
\exend
\end{exa}

\begin{rmk}[$\ell$-adic realization]\label{rmk:realization}
To focus on the case of interest, say $K=\overline{\F}_{p^s}$ for some finite field $\F_{p^s}$, \cite[Theorem 11.1]{S} gives a very explicit and classical description of $R\Hom_{\D_\mot(K)}(\Z_\mot[X],\Z)$: after prime-to-$p$ profinite completion and tensoring ${}\widetilde{\otimes}\Q_\ell$ (where $\ell$ is a prime number coprime to $p$), it is equivalent to $R\Gamma(X_{\text{pro\'{e}t}},\Q_\ell)=R\Gamma(X_{\text{\'{e}t}},\Q_\ell)$. This construction is functorial, which, in particular, implies that, for any map $f:X\to Y$ of smooth projective varieties over $K$, the induced action $f^\ast$ on $R\Hom_{\D_\mot(K)}(\Z_\mot[Y],\Z)$ corresponds to the \'{e}tale one.
\exend
\end{rmk}

\section{Quick Review of Categorical Trace}\label{sec:Quick Review of Categorical Trace}

In this section, we will review the notion of categorical trace according to \cite[Lecture X\hspace{-1.2pt}I\hspace{-1.2pt}V and X\hspace{-1.2pt}V]{CC} (, and \cite[Section 2,3]{HSS} including more general and detailed discussions of categorical traces). 

\begin{nota}
We fix a presentably symmetric monoidal stable $\infty$-category $C$, and let $M$ denote the $(\infty,2)$-category $\Mod_C(\mathrm{Pr}^L)$ with unit $1$.
\exend
\end{nota}

\begin{dfn}[trace functors]
Let $X\in M$ be a dualizable object (i.e. there exists some object $X^\lor$ such that it admits a coevaluation map $\mathrm{coev}_X:1\to X\otimes X^\lor$ and an evaluation map $\mathrm{ev}_X:X^\lor\otimes X\to 1$ satisfying some compatibility (\cite[Subsection 2.1]{HSS})). For an endomorphism $f:X\to X$, we define the \emph{trace functor} $\tr(f|X)$ of $f$ as the following endomorphism of $1$ in $M$:
\[
1\xrightarrow{\mathrm{coev}_X}X\otimes X^\lor\xrightarrow{f\otimes\mathrm{id}}X\otimes X^\lor\simeq X^\lor\otimes X\xrightarrow{\mathrm{ev}_X}1.
\]
This construction gives a functor $\tr(-|X):\End_M(X)\to\End_M(1)\simeq C$. Note that this also defines a functor $\tr(-,-):\End(M)\to C$ in the sense of \cite[Definition 2.2]{HSS}.
\exend
\end{dfn}

\begin{pro}[properties of $\tr(-|-)$]\label{pro:properties}
Trace functors satisfy the following properties:
\begin{enumerate}
    \item For any dualizable objects $X,Y\in M$ and endomorphisms $f:X\to X, g:Y\to Y$, we have $\tr(f\otimes g|X\otimes Y)\simeq\tr(f|X)\otimes\tr(g|Y)$.
    \item For any localization sequence $(X,f)\to(Y,g)\to(Z,h)$ in the sense of \cite[Definition 3.2]{HSS}, we have a cofiber sequence $\tr(f|X)\to\tr(g|Y)\to\tr(h|Z)$.
\end{enumerate}
\end{pro}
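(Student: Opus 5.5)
Both properties are formal consequences of the structure of $M=\Mod_C(\mathrm{Pr}^L)$ as a symmetric monoidal $(\infty,2)$-category. I will deduce them from the general results of \cite[Sections 2,3]{HSS}, recast in the explicit language of coevaluation and evaluation maps.

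\textbf{Part (1).} First I record that $X\otimes Y$ is dualizable with dual $X^\lor\otimes Y^\lor$. Its coevaluation is
\[
\mathrm{coev}_{X\otimes Y}\colon 1\simeq 1\otimes 1\xrightarrow{\mathrm{coev}_X\otimes\mathrm{coev}_Y}X\otimes X^\lor\otimes Y\otimes Y^\lor\simeq (X\otimes Y)\otimes(X^\lor\otimes Y^\lor),
\]
and its evaluation is defined analogously. The zig-zag identities then follow from those for $X$ and $Y$, together with the coherence of the symmetric monoidal structure. Writing out $\tr(f\otimes g|X\otimes Y)$ with these maps, the composite factors, up to the symmetry isomorphisms, as
\[
1\simeq 1\otimes 1\xrightarrow{\tr(f|X)\otimes\tr(g|Y)}1\otimes 1\simeq 1.
\]
Under the identification $\End_M(1)\simeq C$, composition of endomorphisms of $1$ agrees with the tensor product. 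This is the Eckmann--Hilton-type identification, which in $\Mod_C(\mathrm{Pr}^L)$ holds because every endofunctor of $1=C$ is of the form $c\otimes-$. Hence the composite is $\tr(f|X)\otimes\tr(g|Y)$. The key point to check carefully is that the shuffle isomorphism $X\otimes X^\lor\otimes Y\otimes Y^\lor\simeq X\otimes Y\otimes X^\lor\otimes Y^\lor$ commutes with $f\otimes\mathrm{id}\otimes g\otimes\mathrm{id}$. This holds by naturality of the symmetry. For the coherent $\infty$-categorical statement I would cite the symmetric monoidality of $\tr$ as a functor $\End(M)\to C$ \cite[Section 2]{HSS}.

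\textbf{Part (2).} This is the more substantive step. A localization sequence in the sense of \cite[Definition 3.2]{HSS} is a sequence $X\xrightarrow{i}Y\xrightarrow{j}Z$ of dualizable objects with the following properties: $i$ is fully faithful with right adjoint $i^R$, $j$ is a localization with fully faithful right adjoint $j^R$, the sequence is a Verdier sequence, and all four functors are morphisms in $M$, i.e.\ $C$-linear and colimit-preserving, with compatible endomorphisms $f,g,h$. The plan proceeds in three steps:
\begin{enumerate}
\item[(i)] Use functoriality of the trace for right-adjointable squares (the ``lax'' functoriality of $\tr$ in \cite[Section 2]{HSS}). Applied to $(i,i^R)$ and $(j,j^R)$, it yields maps $\tr(f|X)\to\tr(g|Y)\to\tr(h|Z)$ whose composite is canonically null, because $j\circ i\simeq0$.
\item[(ii)] Decompose the identity of $Y$ as the cofiber sequence of endofunctors $i\,i^R\to\mathrm{id}_Y\to j^Rj$.
\item[(iii)] Note that $\tr(-|Y)\colon\End_M(Y)\to C$ is an exact functor of stable $\infty$-categories, since it is built from composition with fixed $1$-morphisms in a $C$-linear setting. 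Applying it to the composite with $g$ gives a cofiber sequence
\[
\tr(g\,i\,i^R|Y)\to\tr(g|Y)\to\tr(g\,j^Rj|Y).
\]
\end{enumerate}
Finally, the cyclicity of the trace, $\tr(ab|Y)\simeq\tr(ba|X)$ for $a\colon X\to Y$ and $b\colon Y\to X$, identifies the outer terms. We have $\tr(g\,i\,i^R|Y)\simeq\tr(i^R g\, i|X)\simeq\tr(f|X)$, using $g\,i\simeq i\,f$ and $i^Ri\simeq\mathrm{id}$. Similarly, $\tr(g\,j^Rj|Y)\simeq\tr(j\,g\,j^R|Z)\simeq\tr(h|Z)$.

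The main obstacle is showing that the maps produced by cyclicity coincide with those from step (i), and that the compatibilities $g\,i\simeq i\,f$ and $j\,g\simeq h\,j$ (the latter requiring $g$ to preserve the image of $i$) are part of the data of a localization sequence. Rather than verify these coherences by hand, I would invoke \cite[Theorem 3.4]{HSS} (additivity/localization of traces) directly, after checking that our sequences satisfy its hypotheses.
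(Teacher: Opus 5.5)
Your proposal is correct and follows the same route as the paper. Part (1) comes from the (symmetric) monoidality of the trace construction \cite[Section 2]{HSS}, and part (2) is exactly \cite[Theorem 3.4]{HSS}, which both you and the paper cite; your extra sketch via the cofiber sequence $i\,i^R\to\mathrm{id}_Y\to j^Rj$, exactness of $\tr(-|Y)$, and cyclicity just unpacks that citation and is consistent with it.
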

\begin{proof}
The property (1) follows from this functorial construction. The property (2) is precisely \cite[Theorem 3.4]{HSS}.
\end{proof}

\begin{rmk}[formal discussion of $\#_f$]\label{rmk:formal discussion}
Let $f$ be an endomorphism of $C$. For any dualizable object $X\in M$, we let $f_X$ denote the endomorphism of $X$ which is the base-change of $f$. For any map $F:X\to Y$ of dualizable objects in $M$ with right adjoint functor $G:Y\to X$, there exists the following map:
\[
\tr(f|F):\tr(f_X|X)\to\tr(f_XGF|X)\simeq\tr(Ff_XG|Y)\to\tr(f_YFG|Y)\to\tr(f_Y|Y),
\]
where the first and last maps are induced by an adjoint, the second isomorphism follows from {\cite[Lecture X\hspace{-1pt}V]{CC}}, and the third map is induced by a natural transform $Ff_X\Rightarrow f_YG$ (see also the discussion in \cite[Lecture X\hspace{-1.2pt}V]{CC}). As an example, we apply to $Y=C$. For any dualizabble object $P$ in $X$, it gives the functor $-\otimes_CP:C\to X$ with $C$-linear right adjoint functor. The above discussion defines the map $\tr(f|C)\to\tr(f_X|X)$.

Finally, we can define the element $\#_{f,X}(P)\in\pi_0(\tr(f_X|X))$ as the image of $1\in\pi_0(\tr(f|C))$ by $\pi_0(\tr(f|C))\to\pi_0(\tr(f_X|X))$. Note that this construction is functorial, there is a map $\#_{f,X}: K_0(X)\to\pi_0(\tr(f_X|X))$. In fact, this map is “additive” (i.e. it sends a cofiber sequence $P''\to P\to P'$ in $X$ to $\#_{f,X}(P)=\#_{f,X}(P')+\#_{f,X}(P'')$) by \cref{pro:properties}(3) and (an analogue of) abstract discussion as in \cite[Proposition 15.1]{CC}.

Moreover, if a map $F:X\to Y$ of dualizable objects in $M$ admits a right adjoint, then the formal proof of \cite[Proposition 15.4]{CC} gives the following commutative diagram:
\[
\xymatrix{
{K_0(X)}\ar[r]^-{\#_{f,X}}\ar[d]_-{K_0(F)}&{\pi_0(\tr(f_X|X))}\ar[d]^-{\pi_0(\tr(f|F))} \\
{K_0(Y)}\ar[r]_-{\#_{f,Y}}&{\pi_0(\tr(f_Y|Y)),}
}
\]
where $K_0(X)$ denotes the $K_0$-group of the full $\infty$-subcategory spanned by dualizable objects in $X$.
\exend
\end{rmk}

\section{Some Properties of Categorical Zeta Functions}\label{sec:Some Properties of Categorical Zeta Functions}

In this section, we will specialize in the case of algebraic varieties (in motivic sheaves), and analyze categorical zeta functions.

\begin{lem}[dualizability]\label{lem:dualizability}
For a qcqs analytic space $X$ over an algebraically closed Banach field $K$ (in the sense of {\cite[Chapter 3]{B}}), we let $\widetilde{\D_\mot^\text{eff}}(X_\arc)$ (resp. $\widetilde{\D_\mot}(X_\arc)$) denote the (localizing) full $\infty$-subcategory generated by $\Z_\mot[U]$ for all open subsets $U\subset\mathbb{A}_X^n$ (resp. the presentable stable $\infty$-category $\widetilde{\D_\mot^\text{eff}}(X_\arc)[\Z(1)^{\otimes(-1)}]$) (, noting that $\Z(1)=(\Z_\mot[\mathbb{G}_m]/\Z_\mot[\ast])[-1]$ lies in $\widetilde{\D_\mot^\text{eff}}(X_\arc)$). The full $\infty$-subcategory $\widetilde{\D_\mot}(X_\arc)$ of $\D_\mot(X_\arc)$ is dualizable (as an object in $\Mod_{\D_\mot(K)}(\mathrm{Pr}^L)$).
\end{lem}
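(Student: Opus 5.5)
The plan is to show that $\widetilde{\D_\mot}(X_\arc)$ is compactly generated as a $\D_\mot(K)$-module, by objects that are dualizable (or at least compact) relative to $\D_\mot(K)$. Then I would invoke the standard fact that a compactly generated presentable stable module over a rigid, or compactly generated with compact dualizable unit, base $C$ is dualizable in $\Mod_C(\mathrm{Pr}^L)$, with dual $\mathrm{Ind}((\text{compacts})^{\op})$ (cf.\ \cite[Section 2]{HSS} and \cite[Lecture X\hspace{-1.2pt}I\hspace{-1.2pt}V]{CC}). For $C=\D_\mot(K)$, \cref{exa:geometric points} says that $C$ is compactly generated, the unit is compact, and all compact objects are dualizable. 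So $C$ is rigid in the compactly generated sense, and dualizability of a compactly generated $C$-module reduces to the usual argument for $\mathrm{Ind}$-categories of small $C^\omega$-enriched categories.

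\emph{Step 1.} Show that each generator $\Z_\mot[U]$, with $U\subset\mathbb{A}_X^n$ open, is compact in $\D_\mot(X_\arc)$. The functor $\Hom(\Z_\mot[U],-)$ is evaluation of the motivic localization at $U$. For $X$ qcqs and $U$ a qcqs open in affine space, finitary sheaves commute with filtered colimits on such objects (\cite[Definition 4.10, Proposition 4.12]{S}). Moreover, the localization $L$ of \cite[Proposition 5.8]{S} is a geometric realization of values on $R\langle\Delta^n\rangle$, hence commutes with filtered colimits. If $U$ is not quasicompact, I would restrict the generating set to qcqs opens; these already generate.

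\emph{Step 2.} Twists. Inverting $\Z(1)$ preserves compact generation by the $\Z_\mot[U](-j)$, because $\Z(1)$ is itself built from compact generators and inversion is a colimit of the tensoring-with-$\Z(1)^{\otimes -1}$ tower. Hence $\widetilde{\D_\mot}(X_\arc)$ is compactly generated by the $\Z_\mot[U](j)$, $j\in\Z$. It is a $\D_\mot(K)$-submodule via pullback along $X\to\M(K)$, since $\Z_\mot[V]\otimes\pi^\ast\Z_\mot[W]=\Z_\mot[V\times_K W]$ again lies in the generating class up to the closure properties. Here I must check that products of opens in affine spaces over $X$ with smooth $K$-objects stay inside the subcategory, or else enlarge generators to the $C$-linear closure.

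\emph{Step 3.} Conclude that a $C$-module $\mathrm{Ind}(\mathcal{A})$, with $\mathcal{A}$ small $C^\omega$-linear, is dualizable with dual $\mathrm{Ind}(\mathcal{A}^{\op})$. Evaluation is given by the $C$-valued Hom pairing, and coevaluation by the identity bimodule.

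The main obstacle is Step 2's $C$-linearity: ensuring the subcategory is closed under the $\D_\mot(K)$-action. The subtle point is that $\D_\mot(K)$ is generated by objects like $\Z_\mot[Y]$ for smooth projective $Y$, per \cref{exa:geometric points}, which are not obviously opens in affine space. I would handle this by replacing the subcategory with the localizing $C$-submodule generated by the $\Z_\mot[U]$. This agrees with the given subcategory when $K$ is such that the generators of $C$ are generated by opens in affine spaces; otherwise compactness of the $C$-tensors follows since $\pi^\ast$ preserves compacts.
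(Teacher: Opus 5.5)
\textbf{There is a genuine gap: Step 1 is false, and the compact-generation strategy falls with it.}

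The opens $U\subset\mathbb{A}^n_X$ in the lemma are Berkovich opens. These are essentially never quasi-compact, since a compact open in a Hausdorff space is clopen. Your fallback of keeping only the qcqs opens therefore leaves, for connected $X$, little more than $\Z_\mot[X]$, which does not generate.

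Nor is $\Z_\mot[U]$ itself compact. As an arc-stack, $U$ is the filtered union of compact pieces, so $\Hom(\Z_\mot[U],-)$ is an inverse limit of sections over those pieces, and such a limit does not commute with direct sums. Here is a concrete instance. For non-discrete $K$, take $X=\M(K\langle T\rangle)$, $n=0$, and $U=\{|T|<1\}=\bigcup_{r<1}D_r$ with $D_r=\{|T|\le r\}$. Choose rigid points $x_m$ with $|T(x_m)|=r_m\uparrow 1$.
\begin{itemize}
\item Each $i_{m*}\Z\simeq\mathrm{cofib}(\Z_\mot[X\setminus\{x_m\}]\to\Z_\mot[X])$ lies in $\widetilde{\D_\mot^{\mathrm{eff}}}(X_\arc)$.
\item Only finitely many of the $i_{m*}\Z$ are nonzero on a given $D_r$.
\item Put $A=\Hom_{\D_\mot(K)}(\Z,\Z)\neq 0$. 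Then $\Hom(\Z_\mot[U],\bigoplus_m i_{m*}\Z)\simeq\lim[r<1]\Gamma(D_r,\bigoplus_m i_{m*}\Z)\simeq\prod_m A$.
\item On the other hand, $\bigoplus_m\Hom(\Z_\mot[U],i_{m*}\Z)\simeq\bigoplus_m A$.
\end{itemize}
This is the same phenomenon as for sheaves on a positive-dimensional manifold: that derived category is dualizable without having nonzero compact objects. Dualizability here does not come from compact generation, so the $\mathrm{Ind}(\mathcal{A})$ argument of your Step 3 cannot be run on these generators.

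\textbf{The missing idea is compact \emph{assembly}.} The paper follows the proof of \cite[Proposition 10.3]{S}.
\begin{itemize}
\item Each open $V\subset\mathbb{A}^n_X$ is the filtered union of opens $U\Subset V$, so $\Z_\mot[V]\simeq\colim[U\Subset V]\Z_\mot[U]$.
\item The transition maps $\Z_\mot[U]\to\Z_\mot[U']$ for $U\Subset U'$ are compact \emph{morphisms}, even though no $\Z_\mot[U]$ need be a compact object.
\item The argument of loc.\ cit.\ shows that a presentable stable category generated by such colimits is compactly assembled, i.e.\ dualizable in $\mathrm{Pr}^L_{\st}$. One then passes to $\widetilde{\D_\mot}(X_\arc)$.
\item Only at the end is rigidity of $\D_\mot(K)$ used, to upgrade plain dualizability to dualizability in $\Mod_{\D_\mot(K)}(\mathrm{Pr}^L)$.
\end{itemize}
So your Step 3 is needed only in this weaker form. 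Your Step 2 concern, that the subcategory should be closed under the $\D_\mot(K)$-action, is legitimate. It is secondary, though: the paper disposes of it with a one-line compatibility remark, and it does not repair Step 1.
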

\begin{proof}
As in the proof of {\cite[Proposition 10.3]{S}}, every open subset $V\subset\mathbb{A}_X^n$ can be written as a filtered union of open subsets $U\Subset V$, so $\Z_\mot[V]$ can be rewritten as a filtered colimit with compact transition maps. Since $\widetilde{\D_\mot^\mathrm{eff}}(X_\arc)$ is precisely the $\infty$-subcategory $\mathcal C$ appearing in the proof of {\cite[Proposition 10.3]{S}}, the same argument shows that it is compactly assembled (i.e. dualizable), which gives dualizability of $\widetilde{\D_\mot}(X_\arc)$. In particular, by \cref{exa:geometric points}, $\widetilde{\D_\mot}(K)$ is equivalent to $\D_\mot(K)$, and it is rigid. The construction is compatible with the \(\D_\mot(K)\)-module structure
induced by the structural morphism \(X\to \operatorname{Spec}(K)\), which gives the desired dualizability over $\D_\mot(K)$.
\end{proof}

Therefore, we can apply the previous section to the motivic sheaves on “varieties", since the analytification of any variety over an algebraically closed field is qcqs.

\begin{nota}
For the rest of this section, we work over a finite field $\F_q$ with $q=p^s$, and set $k:=\overline{\F}_q$. For simplicity, we abbreviate $\D_\mot(k)$ as $k_\arc$, for example, $\tr(f|\D_\mot(k))$ as $\tr(f|k_\arc)$.
\exend
\end{nota}

\begin{dfn}[local term]
Let $X$ be a smooth projective variety over $\F_q$. We let $X_k$ (resp. $F:\mathrm{Spec}(k)\to\mathrm{Spec}(k)$) denote the base-change of $X$ along $\F_q\to k$ (resp. the induced map by the \emph{geometric} Frobenius map of $\mathrm{Spec}(\F_q)$). Note that $X_k$ can be considered a proper analytic space over an algebraically closed (discrete) Banach field $k$, the $\infty$-category $\widetilde{\D_\mot}(X_{k,\arc})$ of motivic sheaves is dualizable in $\D_\mot(k)$. We let
\[
\#_{F,n}:K_0(X_{k,\arc})\to\pi_0(\tr((F^\ast)^n|k_\arc))
\]
denote $(\pi_0(\tr((F^\ast)^n|f_\ast(-)^\lor\widetilde{\otimes}\Q_\ell))\circ\#_{(F_{X_k}^\ast)^n,X_{k,\arc}})(-)=(\#_{(F^\ast)^n,k_\arc}\circ K_0(f_\ast(-)^\lor\widetilde{\otimes}\Q_\ell))(-)$ as in \cref{rmk:formal discussion}, where $f$ denotes the structure map $f:X_k\to\mathrm{Spec}(k)$.
\exend
\end{dfn}

\begin{rmk}[computation of local term]\label{rmk:computation of local term}
For a smooth projective variety $X$ over $\F_q$ with base-change $X_k=X\times_{\mathrm{Spec}(\F_q)}\mathrm{Spec}(k)$, and a positive integer $n\in\Z_{>0}$, by construction and \cref{rmk:realization}, the $n$-th term $\#_{F,n}(1_{X_k})$ is equal to
\[
\#_{(F^\ast)^n,k_\arc}(R\Hom(\Z_\mot[X_k],\Z)\widetilde{\otimes}\Q_\ell)=\#_{(F^\ast)^n,k_\arc}(R\Gamma(X_{k,\text{\'{e}t}},\Q_\ell)).
\]
Also, we can compute $\#_{(F^\ast)^n,k_\arc}(P)$ as $\tr((F^\ast)^n|P)$ (where $P$ denotes $R\Gamma(X_{k,\text{\'{e}t}},\Q_\ell)$), noting that it is defined by the image of $1\in\pi_0(\tr((F^\ast)^n|k_\arc))$ by $\pi_0(\tr((F^\ast)^n|k_\arc))\to\pi_0(\tr((F^\ast)^n|k_\arc))$ (induced by $(-)\otimes P$) as in \cref{rmk:formal discussion}. The \emph{usual} trace $\tr((F^\ast)^n|P)$ can be given by $\sum_{i=0}^\infty(-1)^i\Tr((F^\ast)^n|H^i(P))$, which gives the following computation:
\[
\#_{F,n}(1_{X_{k,\arc}})=\#_{(F^\ast)^n,k_\arc}(R\Gamma(X_{k,\text{\'{e}t}},\Q_\ell))=\sum_{i=0}^\infty(-1)^i\Tr((F_{X_k}^\ast)^n|H^i(X_{k,\text{\'{e}t}},\Q_\ell)).
\]
In particular, $\#_{F,n}(1_{X_{k,\arc}})$ lies in $\Q$ for all positive integer $n\in\Z_{>0}$.
\exend
\end{rmk}

\begin{dfn}[motivic congruence zeta function]
Let $X$ be a smooth projective variety over $\F_q$. We let $X_k$ (resp. $F:\mathrm{Spec}(k)\to\mathrm{Spec}(k)$) denote the base-change of $X$ along $\F_q\to k$ (resp. the induced map by the \emph{geometric} Frobenius map of $\mathrm{Spec}(\F_q)$). The \emph{motivic congruence zeta function} $Z_\mot(X,t)$ is defined by 
\[
\exp\left(\sum_{n=1}^\infty\dfrac{\#_{F,n}(1_{X_{k,\arc}})}{n}t^n\right)\in\Q[[t]].
\]
\exend
\end{dfn}

\begin{thm}[classical congruence zeta functions]\label{thm:classical}
For a smooth projective variety $X$ over $\F_q$ with base-change $X_k=X\times_{\mathrm{Spec}(\F_q)}\mathrm{Spec}(k)$, the motivic congruence zeta function of $X$ coincides with the classical one. That is, we obtain the following equation (as an equality in $\Q[[t]]$):
\[
Z_\mot(X,t)=\exp\left(\sum_{n=1}^\infty\dfrac{\#(X(\F_{q^n}))}{n}t^n\right).
\]
\end{thm}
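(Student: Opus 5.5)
Since both sides are exponentials of power series with zero constant term, it suffices to compare the coefficients termwise: we will show that for every $n\in\Z_{>0}$ one has $\#_{F,n}(1_{X_{k,\arc}})=\#(X(\F_{q^n}))$. Because $\exp$ is injective on $t\Q[[t]]$, the equality $Z_\mot(X,t)=Z(X,t)$ in $\Q[[t]]$ then follows.

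The first step is to invoke \cref{rmk:computation of local term}, which already reduces the motivic local term to an $\ell$-adic one:
\[
\#_{F,n}(1_{X_{k,\arc}})=\sum_{i=0}^{2\dim X}(-1)^i\Tr((F_{X_k}^\ast)^n|H^i(X_{k,\text{\'{e}t}},\Q_\ell)).
\]
The sum is finite because $H^i$ vanishes for $i>2\dim X$. We should make sure that the endomorphism $F_{X_k}^\ast$ appearing here is the one induced by the geometric Frobenius of $\F_q$ via base change, i.e.\ $\mathrm{id}_X\times F$. Under the functoriality of \cref{rmk:realization}, this acts on étale cohomology as the geometric Frobenius. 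The relative ($q$-power) Frobenius $\Phi_X\times\mathrm{id}_k$ on $X_k$ composed with $\mathrm{id}_X\times F^{-1}$ is the absolute Frobenius, and the absolute Frobenius acts trivially on étale cohomology. Hence on $H^i(X_{k,\text{\'{e}t}},\Q_\ell)$ the action of $F_{X_k}^\ast$ agrees with that of the relative Frobenius $\Phi^\ast$.

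The second step is to apply the Grothendieck--Lefschetz trace formula (\cite{SGA}) to the proper smooth variety $X_k$ and the $q^n$-Frobenius $\Phi^n$:
\[
\sum_i(-1)^i\Tr((\Phi^\ast)^n|H^i(X_{k,\text{\'{e}t}},\Q_\ell))=\#\mathrm{Fix}(\Phi^n|X(k))=\#X(\F_{q^n}).
\]
Here the last equality holds because the fixed points of the $q^n$-power Frobenius on $X(k)$ are exactly the $\F_{q^n}$-rational points. The formula is usually stated for the compactly supported cohomology $H^i_c$, which coincides with $H^i$ because $X$ is projective.

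The main obstacle is the Frobenius bookkeeping in the first step. One must check that the endomorphism $F^\ast$ of $\D_\mot(k)$, transported through the realization of \cref{rmk:realization} and the dualization $f_\ast(-)^\lor$ in the definition of $\#_{F,n}$, really corresponds to $\Phi^\ast$ rather than to its inverse or to an arithmetic Frobenius. Mixing up geometric and arithmetic Frobenius, or cohomology and its dual, would replace the point count by $q^{n\dim X}\cdot\#X(\F_{q^n})$ through Poincaré duality, or by values at $q^{-n}$. I would settle this by first testing the case $X=\mathrm{Spec}(\F_q)$, where the trace on $H^0=\Q_\ell$ is $1$. I would then test $X=\mathbb{P}^1$, where the trace on $H^2=\Q_\ell(-1)$ must come out as $q^n$, giving $1+q^n=\#\mathbb{P}^1(\F_{q^n})$. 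This fixes the normalizations, and the general case then follows from the trace formula.
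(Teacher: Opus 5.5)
Your proposal is correct and follows the paper's route. Like the paper, you compare coefficients termwise, use \cref{rmk:computation of local term} to rewrite $\#_{F,n}(1_{X_{k,\arc}})$ as the alternating trace of $(F_{X_k}^\ast)^n$ on $H^i(X_{k,\text{\'{e}t}},\Q_\ell)$, and then apply the Grothendieck--Lefschetz trace formula. Your extra check that $(\mathrm{id}_X\times F)^\ast$ agrees with the relative $q$-Frobenius on cohomology (because the absolute Frobenius acts trivially) is correct, and it makes explicit a normalization the paper leaves implicit.
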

\begin{proof}
As in \cref{rmk:realization}, $\tr((F^\ast)^n|R^i\Hom_{\D_\mot(k)}(\Z_\mot[X_k],\Z)\widetilde{\otimes}\Q_\ell)$ can be considered as the classical trace of $H^i(X_{k,\text{\'{e}t}},\Q_\ell)$ by $(F_{X_k}^n)^\ast$ where $\ell$ is a prime number coprime to $p$. For any positive integer $n\in\mathbb{Z}_{>0}$, by the Grothendieck--Lefschetz trace formula (\cite{SGA}), the number $\#(X(\F_{q^n}))$ of $\F_{q^n}$-rational points can be computed by $\sum_{i=0}^\infty(-1)^i\Tr((F_{X_k}^\ast)^n|H^i(X_{k,\text{\'{e}t}},\Q_\ell))$. By \cref{rmk:computation of local term}, the alternating sum is computed by $\#_{F,n}(1_{X_{k,\arc}})$:
\[
\#_{F,n}(1_{X_{k,\arc}})=\sum_{i=0}^\infty(-1)^i\Tr((F_{X_k}^\ast)^n|H^i(X_{k,\text{\'{e}t}},\Q_\ell))=\#(X(\F_{q^n})).
\]
These computations above give the desired identification.
\end{proof}

\begin{rmk}
As in \cref{exa:geometric points}, Scholze gives some description of $\D_\mot(K)$ for an algebraically closed (non-discrete) Banach field $K$. Our strategy can be applied to them since, for main result (\cref{thm:classical}), our strategy mainly relies on categorical traces, and we only use the Grothendieck--Lefschetz trace formula to identify $\#(X(\F_{q^n}))$ with the alternating sum by the \emph{usual} trace on \'{e}tale cohomology. For example, in \cref{rmk:formal discussion}, we can take $M=\D_\mot(K)$ and $X=\D_\mot(Y)$ (where $Y$ denotes a “good" analytic space over $K$), and define $\#_{f,X}$ for a “suitable" endomorphism $f$.
\exend
\end{rmk}


\begin{thebibliography}{1}
\bibitem[Ber90]{B} Vladimir G. Berkovich, {\em Spectral theory and analytic geometry over non-Archimedean fields}, Math. Surv. Monogr.
\textbf{33}, AMS, 1990.
\bibitem[Cla25]{C} Dustin Clausen, {\em Duality and linearization for $p$-adic lie groups}, preprint (arXiv:\href{https://arxiv.org/abs/2506.18174}{2506.18174}), 2025.
\bibitem[CC]{CC} Dustin Clausen and Peter Scholze, {\em Condensed Mathematics and Complex Geometry}, preprint (arXiv:\href{https://arxiv.org/abs/2605.11731}{2605.11731}), 2026.
\bibitem[Gel41]{G} Israil M. Gelfand, {\em Normierte Ringe}, Sb. Math., \textbf{51} (1941) 1, 3-24.
\bibitem[SGA5]{SGA} Alexsander Grothendieck et al., {\em S\'{e}minaire de G\'{e}om\'{e}trie Alg\'{e}brique du Bois Marie - 1965-66 - Cohomologie l-adique et Fonctions L - (SGA 5)}, Lecture Notes in Mathematics, Vol. 589. Berlin; New York: Springer-Verlag, 1977.
\bibitem[HM24]{HM} Claudius Heyer and Lucas Mann, {\em 6-Functor Formalisms and Smooth Representations}, preprint (arXiv:\href{https://arxiv.org/abs/2410.13038}{2410.13038}), 2024.
\bibitem[HSS17]{HSS} Marc Hoyois, Sarah Scherotzke, and Nicolo Sibilla, {\em Higher traces, noncommutative motives, and the categorified chern character}, Adv. Math. \textbf{309} (2017), 97-154.
\bibitem[Kah24]{Ka} Bruno Kahn, {\em Zeta and L functions of Voevodsky motives}, preprint (arXiv:\href{https://arxiv.org/abs/2412.08437}{2412.08437}), 2024.
\bibitem[Sch24]{S} Peter Scholze, {\em Berkovich motives}, preprint (arXiv:\href{https://arxiv.org/abs/2412.03382}{2412.03382}), 2024, to appear in J. Amer. Math. Soc..
\bibitem[Wei49]{W} Andr\'{e} Weil, {\em Numbers of solutions of equations in finite fields}, Bull. Amer. Math. Soc. \textbf{55} (1949), 497–508.
\end{thebibliography}
\end{document}